\newcolumntype{x}[1]{%
>{\raggedleft\hspace{0pt}}p{#1}}%
\newtheorem{theorem}{Theorem}
\newtheorem{lemma}[theorem]{Lemma}
\theoremstyle{definition}
\newtheorem{definition}[theorem]{Definition}
\title{Branching Data for Algebraic Functions and Representability by Radicals}
\begin{document}

\author{Y. Burda, A. Khovanskii}

\maketitle

\begin{abstract}
The branching data of an algebraic function is a list of orders of local monodromies around branching points. We present branching data that ensure that the algebraic functions having them are representable by radicals. This paper is a review of recent work by the authors and of closely related classical work by Ritt.
\end{abstract}

\begin{center} 
\large  \textit{Dedicated to Michael Singer
on the occasion of his 60\textsuperscript{th} birthday}
\end{center}

\begin{center}
Submitted for publication to Banach Center Publications on April 1\textsuperscript{st}, 2011
\end{center}

\section{Introduction}

An algebraic function of one complex variable has a very simple invariant --- the branching orders at all its branching points. The monodromy around a branching point is the permutation induced on the sheets of the algebraic function by going around a small loop around the point. The branching order at a branching point is the order of this permutation, i.e. the smallest common multiplier of the lengths of cycles in the cycle decomposition of the monodromy around this point. This is an extremely weak invariant: its knowledge doesn't even give one the description of local monodromy around the branching points, to say nothing of the global monodromy group! Nevertheless there is a list of branching data which guarantee that algebraic functions having these branching data have a rather simple monodromy group, which in particular guarantees that these functions are representable by radicals (or solution of equation of degree 5 in one case). The class of algebraic functions with each of these branching data contains in general many functions of different degrees and having different Riemann surfaces. However all these functions turn out to be easily describable by explicit formulae. Moreover, for any branching data not from this list one can find an algebraic function with this branching having arbitrarily complicated monodromy group.
 
 \subsection{Historical remarks}

This section is written in first person by the second author, A. Khovanskii.

It was known since Jordan that the monodromy group of an algebraic function is isomorphic to the Galois group of the equation defining it over the field of rational functions. I've discovered a topological version of Galois theory giving new results about unsolvability of algebraic and differential equations in finite terms \cite{KhovanskiiTopologicalGaloisTheory},\cite{khovanskii2004solvability}. The role of Galois group in this theory is played by the monodromy group of a multivalued analytic function. While lecturing on this theory in 2006 I unexpectedly discovered \cite{Kho} some of the results presented in this article. Later I found the wonderful work of Ritt \cite{Rit} who advanced significantly further than me in many regards. However I didn't find there answers to some of the questions that interested me. I gave these questions to my graduate student Yuri Burda. Yuri has found simple and transparent answers to these questions \cite{Bur}. The central part of this article is devoted to a review of these results. They are different from those of Ritt and are obtained by different methods; however they explain essential parts of Ritt's results.

Ritt classifies rational functions of prime degree invertible in radicals. He uses the description (due to Galois) of transitive solvable permutation groups on prime number of elements.

Burda finds all branching data that guarantee that algebraic functions with these branching data are  representable by radicals. These algebraic functions can have any degree, not necessarily a prime number and the functions of prime degree described by Ritt are contained among them.

Using a different technique Ritt has also found another amazing result: a complete description of all polynomials invertible in radicals. It is hard to add anything to this description, so we just provide its exact formulation.

\subsection{Structure of The Article}

In section \ref{section:preliminaries} we define what branching datum is, remind the reader what a Galois covering is and show how to construct the minimal Galois covering dominating a given covering. In section \ref{section:class_of_functions_with_given_branching} we give an alternative description of the class of functions subject to the branching datum of a given Galois covering. In section \ref{section:galois_coverings_with_total_space_a_sphere_or_a_torus} we describe the branching data that guarantee that the total space of any Galois covering with this branching has genus zero or one. We also describe all such Galois coverings and comment on their properties. In section \ref{section:other_branching_types} we show that for other branching data the class of functions subject to them contains arbitrarily complicated coverings. Finally section \ref{section:ritt_work} explains how Ritt's results about rational functions of prime degree invertible in radicals are clarified by the results of previous sections and reviews his results about polynomials invertible in radicals.
 
\section{Preliminaries}
\label{section:preliminaries}
\subsection{Branching Data}

In this article we'll study holomorphic mappings between Riemann surfaces. Sometimes we will refer to them simply as ``branched coverings''. Of particular importance to us will be the following definition:

\begin{definition}
Let $f:X\to Y$ be a holomorphic mapping of Riemann surfaces. Let $y_0=f(x_0)$. We say that the \textit{order of branching} of $f$ at point $x_0$ is $n$ if one can find local coordinates $x$ and $y$ around $x_0$ and $y_0$ such that in the corresponding coordinate charts the mapping $f$ is given by $x\to y=x^n$.
\end{definition}

We can characterize the order of branching topologically as follows: mapping $f$ has branching of order $n$ at point $x_0$ if $f$ maps a small positively oriented loop around the point $x_0$ to $n$-th power of a small positively oriented loop around point $y_0$.

We will need the following definition later:

\begin{definition}
\textit{Branching datum} on a Riemann surface $Y$ is a finite collection of distinct points $p_1,\ldots,p_b\in Y$ and natural numbers $n_1,\ldots,n_b\geq 2$ associated to these points.

A function $f:X\to Y$ is \textit{subject to branching datum} $(p_1,n_1),\ldots,(p_b,n_b)$ if all the lifts of a small loop winding $n_i$ times around point $p_i$ to a path in $X$ are closed loops.
\end{definition}

In other words $f:X\to Y$ is subject to branching data $(p_1,n_1),\ldots,(p_b,n_b)$ if the order of branching of $f$ at any preimage of $p_i$ divides $n_i$.

We say that the branching datum $(p_1,n_1),\ldots,(p_b,n_b)$ is \textit{the branching datum of function} $f:X\to Y$ if it is the minimal branching datum to which $f$ is subject. In other words $p_1,\ldots,p_b$ are exactly the critical values of $f$ and the multiplicity $n_i$ at point $p_i$ is the least common multiple of the branching orders of $f$ at all the preimages of $p_i$.

\subsection{Galois Coverings}

\textit{The Galois group} of a branched covering $g:X\to Y$ is the group of all covering automorphisms of $g$, i.e. the set of automorphisms $h:X\to X$ fitting into the diagram
$$\xymatrix
{
X  \ar[rr]^h \ar[dr]_g & & X \ar[dl]^g \\
 & Y &
 }$$

We say that a branched covering $g:X\to Y$ is a \textit{Galois covering} if its Galois group acts transitively on all fibers. In particular the branching order of all points in a fiber over $y\in Y$ is the same. In this case we will also say that this common branching order is the branching order of the Galois covering $g$ at a point $y\in Y$ in the base.

One can think about a Galois covering with total space $X$ and Galois group $G$ simply as a quotient of $X$ by the action of the group $G$.

Given any branched covering $f:X\to Y$ one can create the minimal Galois covering $g:\tilde{X}\to Y$ that dominates it (algebraically it would correspond to taking the Galois closure of the field extension $\mathbf C(X)/\mathbf{C}(Y)$).  A loop $\gamma$ in $Y$ avoiding the critical values of $f$ lifts to a closed loop by means of $g$ if and only if its lifts based at all the points in the fiber of $f$ over the point $\gamma(0)$ are closed loops. From this description it follows that if the branching orders of points in the fiber of $f$ over $y\in Y$ are $n_1,\ldots,n_k$, then the branching order of the Galois covering $g$ over the point $y\in Y$ is the least common multiple of $n_1,\ldots,n_k$. In particular the branching datum of $f$ is the same as that of the Galois covering $g$.

\section{The Class of Functions Subject to a Given Branching Datum}
\label{section:class_of_functions_with_given_branching}

Let the branching datum $(p_1,n_1),\ldots,(p_b,n_b)$ be the branching datum of some Galois covering $g:Z\to Y$.

Consider the class of all functions $f:X\to Y$ subject to this branching datum.

We have a surprising alternative description of this class:

\begin{theorem}
\label{theorem:description_of_class_of_functions_subject_to_given_branching_datum}
A function $f:X\to Y$ is subject to the branching datum of the Galois covering $g:Z\to Y$ if and only if there exists an unbranched covering $\tilde{f}:W\to Z$ such that the covering $g\circ \tilde{f}$ dominates $f$, i.e. if and only if $f$ fits into the diagram
$$\xymatrix
{
               &  W \ar[dl]_{\tilde{g}} \ar[dr]^{\tilde{f}} &               \\
X \ar[dr]_{f}  &                                            & Z \ar[dl]^{g} \\
               &  Y                                         & 
}$$
with $\tilde{f}$ unbranched.
\end{theorem}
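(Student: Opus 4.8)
The plan is to pass to the monodromy description of branched coverings and then, in the hard direction, realise $W$ as a component of a fibre product. Write $S=\{p_1,\dots,p_b\}$, $Y^{*}=Y\setminus S$, $\pi=\pi_1(Y^{*},y_0)$, and let $\gamma_i\in\pi$ be a small positively oriented loop around $p_i$. Over $Y^{*}$ every branched covering restricts to a genuine covering, hence is encoded by its monodromy: a $\pi$-set (the fibre over $y_0$), with the branching order at a preimage of $p_i$ equal to the length of the $\gamma_i$-cycle through the corresponding point. In this language ``$f$ is subject to the datum'' says exactly that $\rho_f(\gamma_i)^{n_i}=\mathrm{id}$ for every $i$, where $\rho_f\colon\pi\to\mathrm{Sym}(F_X)$ is the monodromy of $f$; and since $g$ is Galois with this very datum, $\gamma_i$ acts on its fibre $F_Z$ with \emph{every} cycle of length exactly $n_i$. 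The one computational tool I will use throughout is the multiplicativity of local degrees: the local degree of $g\circ\tilde f$ at $w$ equals the local degree of $\tilde f$ at $w$ times the local degree of $g$ at $\tilde f(w)$.

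For the ``if'' direction, suppose $f$ fits into the stated diagram with $\tilde f$ unbranched. Applying multiplicativity at a preimage $w$ of $p_i$, the $\gamma_i$-cycle length of $w$ in $F_W$ equals the local degree of $\tilde f$ at $w$ (which is $1$) times the $\gamma_i$-cycle length of $\tilde f(w)$ in $F_Z$ (which is $n_i$); hence $\gamma_i$ acts on $F_W$ with all cycles of length $n_i$, so $\rho_{g\circ\tilde f}(\gamma_i)^{n_i}=\mathrm{id}$. The map $\tilde g$, being a nonconstant holomorphic map of connected surfaces, induces a surjective $\pi$-equivariant map $F_W\to F_X$, and a surjective equivariant map carries a trivial action of $\gamma_i^{n_i}$ to a trivial one; therefore $\rho_f(\gamma_i)^{n_i}=\mathrm{id}$ and $f$ is subject to the datum.

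For the ``only if'' direction I would construct $W$ directly. Give $F_X\times F_Z$ the diagonal $\pi$-action and let $F_W$ be the $\pi$-orbit of a point $(a_0,z_0)$; this is the fibre of a connected component $W^{*}$ of the fibre product $X^{*}\times_{Y^{*}}Z^{*}$, with $\tilde g,\tilde f$ the two projections. Since $X$ is connected $\rho_f$ is transitive, so the first projection $F_W\to F_X$ is onto; the second is onto because $Z$ is connected, so $\rho_g$ is transitive on $F_Z$. The key point is the cycle count: the $\gamma_i$-cycle length at $(a,b)$ is the $\mathrm{lcm}$ of the $\gamma_i$-cycle lengths of $a$ and of $b$, which equals $\mathrm{lcm}(d,n_i)=n_i$ because $d\mid n_i$ by the hypothesis on $f$ and the cycle length of $b$ is exactly $n_i$ by the Galois property. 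Thus every $\gamma_i$-cycle in $F_W$ has length $n_i$, matching $F_Z$, so by multiplicativity $\tilde f$ has local degree $1$ everywhere, i.e. is unbranched. Finally, the $\pi$-set $F_W$, in which each $\gamma_i$ acts with finite order, is realised by an honest branched covering $W\to Y$ obtained by filling in the punctures (Riemann existence), and $\tilde g,\tilde f$ extend holomorphically across them.

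The main obstacle is the tension in the ``only if'' direction between keeping $W$ connected and keeping both projections surjective while forcing $\tilde f$ to be unbranched everywhere: a naive fibre product is disconnected and may have components on which $\tilde f$ branches. Choosing the component through $(a_0,z_0)$ resolves surjectivity, and it is precisely the \emph{uniformity} of the Galois branching order --- every $\gamma_i$-cycle in $F_Z$ having the full length $n_i$ --- that saturates each product cycle to length $n_i$ and thereby kills all branching of $\tilde f$. The only other point needing care is the passage from the combinatorial $\pi$-set data back to holomorphic maps of closed Riemann surfaces, which is the standard completion of coverings over the punctures together with the multiplicativity of local degrees used above.
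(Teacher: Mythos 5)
Your proof is correct and follows essentially the same route as the paper's: the ``if'' direction is the multiplicativity of local degrees, and the ``only if'' direction takes for $W$ a connected component of the fibre product of $f$ and $g$, with the divisibility $d\mid n_i$ (via $\operatorname{lcm}(d,n_i)=n_i$) forcing $\tilde f$ to be unbranched --- exactly the paper's argument, rendered in monodromy/$\pi$-set language with the technical points (connectedness, surjectivity, completion across the punctures) spelled out. The only imprecision is the blanket claim that every branched covering restricts to a genuine covering over $Y^{*}$: in the ``if'' direction you should first note that the critical values of $f$ lie in $S$ (immediate from the multiplicativity you invoke, since $\tilde f$ is unbranched and $g$ branches only over $S$), as $\rho_f$ is not defined on $\pi_1(Y^{*},y_0)$ until this is known.
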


The proof of this theorem is very easy: if $f$ fits into such a diagram, its branching order at any point $x\in X$ must divide the branching order of $f\circ\tilde{g}$ at any preimage of $x$ under $\tilde{g}$. Since $f\circ\tilde{g}=g\circ\tilde{f}$ and $\tilde{f}$ is unbranched, this last branching order is equal to the branching order of the Galois covering $g$ over $f(x)$.

Conversely, if the branching order of $f$ is subject to the branching datum of $g$, then we can take $W$ to be an irreducible component of the fibered product of $f$ and $g$. Since the branching order of $f$ at any point $x\in X$ divides the branching order of $g$ over $f(x)$, the mapping $\tilde{f}:W\to Z$ is unbranched.

Note that the restriction that the branching datum we start with is a branching datum of a Galois covering is not essential. We can take $(p_1,n_1),\ldots,(p_b,n_b)$ to be the branching datum of any function $h:Z'\to Y$ and then take $g:Z\to Y$ to be the minimal Galois covering that dominates $h$. Its branching datum is the same as the one for $h$.

\section{Galois Coverings with Total Space a Sphere or a Torus}
\label{section:galois_coverings_with_total_space_a_sphere_or_a_torus}
\subsection{Branching Types}

We will now show all possible branching types of Galois coverings over the Riemann sphere whose total space is a Riemann surface of genus zero or one.

The main tool will be Riemann-Hurwitz formula, a relation between the Euler characteristic of the total space of a branched covering, that of the base space and the orders of branching:

Suppose that $f:X\to Y$ is a degree $n$ holomorphic mapping between Riemann surfaces $X$ and $Y$. Then $$2-2g_X=n(2-2g_Y)-\sum\limits_{x\in X} {(b_x-1)}$$
where $g_X$ and $g_Y$ are the genera of $X$ and $Y$ respectively and $b_x$ stands for the order of branching of $f$ at $x$.

In the case that $f$ is a Galois covering the branching orders at points in each fiber are the same and Riemann-Hurwitz formula can be expressed in the form
$$2-2g_X=n\left[2-2g_Y-\sum\limits_{y\in Y} {(1-\frac{1}{b_y}})\right]$$
where $b_y$ denotes now the common branching order of all points in the fiber of $f$ over $y$.

In case the genus of $X$ is known we can use the fact that the genus of $Y$ is a non-negative integer to find all the possibilities for the numbers $b_y$. For $X$ a sphere or a torus we get the following result:

\begin{theorem}
\label{theorem:branching_data_for_galois_coverings}
Let $f:X\to Y$ be a holomorphic map of Riemann surfaces which is a Galois covering. Let $n$ be the degree of $f$.

In the case $X$ is a sphere, $Y$ is also a sphere and $f$ is subject to one of the following branching data (we provide only the branching orders, the branching points can be arbitrary):

$(n,n)$

$(2,2,n)$

$(2,3,3)$,$n=12$

$(2,3,4)$,$n=24$

$(2,3,5)$,$n=60$

In the case $X$ is a torus, $Y$ is either a torus (Riemann surface of genus one) or a sphere. It is a torus if and only if $f$ is unbranched. If $Y$ is a sphere, $f$ is subject to one of the following branching data:

$(2,3,6)$

$(3,3,3)$

$(2,4,4)$

$(2,2,2,2)$
\end{theorem}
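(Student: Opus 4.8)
The plan is to derive the entire classification from the Galois form of the Riemann--Hurwitz formula quoted above, using only two elementary facts. First, the sum $\sum_y(1-1/b_y)$ runs effectively over the branch points, and each summand satisfies $1-1/b_y\in[1/2,1)$ since $b_y\ge 2$ there. Second, every branching order divides the degree: because $f$ is a Galois covering of degree $n$, its Galois group $G$ has order $n$ and acts simply transitively on a generic fiber, so the local monodromy at $p_i$ is an element of $G$ whose order is exactly $b_i$; hence $b_i\mid n$ by Lagrange. I will assume $n\ge 2$, the case $n=1$ being the trivial unbranched covering.

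First I would treat the sphere case. Setting $g_X=0$ gives $2-2g_X=2$, so $2=n\bigl[\,2-2g_Y-\sum_y(1-1/b_y)\,\bigr]$. The left side is positive and the sum is nonnegative, so the bracket is positive; since $2-2g_Y$ exceeds the bracket, $2-2g_Y>0$, forcing $g_Y=0$ and proving that $Y$ is a sphere. Dividing by $n$ yields $\sum_y(1-1/b_y)=2-2/n$. With $b$ the number of branch points, the bound $b/2\le 2-2/n<2$ gives $b\le 3$, while $2-2/n\ge 1$ together with each summand being $<1$ gives $b\ge 2$, so $b\in\{2,3\}$. For $b=2$ I would write $1/b_1+1/b_2=2/n$ and substitute $n=b_im_i$, turning this into $m_1+m_2=2$; hence $m_1=m_2=1$ and $b_1=b_2=n$, the datum $(n,n)$. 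For $b=3$ the equation is $1/b_1+1/b_2+1/b_3=1+2/n>1$; ordering $b_1\le b_2\le b_3$, I would note that $b_1=2$ (else the sum is $\le 1$) and then $b_2\in\{2,3\}$ (since $b_2\ge 4$ forces $1/b_2+1/b_3\le 1/2$), leaving the family $(2,2,b_3)$ with degree $n=2b_3$, matching the listed $(2,2,n)$, and the three sporadic solutions $(2,3,3),(2,3,4),(2,3,5)$, for which solving for $n$ gives $n=12,24,60$.

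The torus case runs in parallel, now with $2-2g_X=0$, so the bracket vanishes and $2-2g_Y=\sum_y(1-1/b_y)\ge 0$ forces $g_Y\le 1$. If $g_Y=1$ the sum is zero, so there are no branch points and $f$ is unbranched, giving the stated dichotomy; if $g_Y=0$ then $\sum_y(1-1/b_y)=2$. The same bounds now yield $b\in\{3,4\}$: for $b=4$ every summand must equal $1/2$, giving $(2,2,2,2)$, and for $b=3$ the equation $1/b_1+1/b_2+1/b_3=1$ has, with $b_i\ge 2$, only the solutions $(2,3,6),(2,4,4),(3,3,3)$. This exhausts the list.

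The one place requiring more than the bare Euler--characteristic count is the $b=2$ sphere subcase: the numerical equation $1/b_1+1/b_2=2/n$ admits spurious solutions such as $(b_1,b_2,n)=(3,6,4)$, and it is precisely the divisibility $b_y\mid n$ coming from $|G|=n$ that eliminates them and collapses this case to exactly $(n,n)$. Everything else is forced by the two inequalities, and the remaining enumerations are the standard finite searches underlying the classification of the finite subgroups of $\mathrm{SO}(3)$ and the Euclidean point groups; I would present them as such rather than grinding through each subcase.
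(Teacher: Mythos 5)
Your proof is correct and is essentially the paper's own argument: the paper establishes this theorem only by remarking that one should combine the Galois form of the Riemann--Hurwitz formula with the fact that the genus of $Y$ is a non-negative integer, and your write-up is precisely that enumeration carried out in detail. Your extra observation that the two-branch-point case needs the divisibility $b_y\mid n$ to collapse to $(n,n)$ fills the one gap the paper's sketch leaves open (in fact the weaker bound $b_y\le n$ already suffices, since $1/b_1+1/b_2=2/n$ with $b_i\le n$ forces $b_1=b_2=n$).
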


\subsection{Examples of Minimal Galois Coverings}
\label{section:examples_of_minimal_galois_coverings}
\begin{definition} A Galois covering with a branching datum $(p_1,n_1),\ldots,(p_b,n_b)$ is \textit{minimal} if it doesn't dominate any Galois covering with the same branching datum.
\end{definition}

We will now give examples of minimal Galois coverings with branching data of the types described in the previous section.

For the case that the number of branching points is at most $3$, the location of the branching points doesn't matter, so we'll refer to the branching types only by the branching orders.

Type $(n,n)$: The quotient of the sphere by the action of the cyclic group of order $n$ acting on it by means of rotations by angle $2\pi/n$ around a fixed axis.

Type $(2,2,n)$: The quotient of the sphere by the action of the dihedral group of order $2n$ acting by rotations that preserve an $n$-gon on a big circle on this sphere.

Type $(2,3,3)$: The quotient of the sphere by the action of the group of rotational symmetries of a regular tetrahedron.

Type $(2,3,4)$: The quotient of the sphere by the action of the group of rotational symmetries of a regular cube/octahedron.

Type $(2,3,5)$: The quotient of the sphere by the action of the group of rotational symmetries of a regular icosahedron/dodecahedron.

And for the torus:

Type $(2,3,6)$: The quotient by the action of the cyclic group of order 6 generated by rotation by angle $2\pi/6$ around the origin in the elliptic curve $\mathbf C/ \Lambda$, where $\Lambda$ is the lattice of Eisenstein integers $\{a+b \omega| a,b\in \mathbf Z\}$ ($\omega$ is a root of unity of order 3)

Type $(3,3,3)$: The quotient by the action of the cyclic group of order 3 generated by rotation by angle $2\pi/3$ around the origin in the elliptic curve $\mathbf C/ \Lambda$, where $\Lambda$ is the lattice of Eisenstein integers $\{a+b \omega| a,b\in \mathbf Z\}$ ($\omega$ is a root of unity of order 3)

Type $(2,4,4)$: The quotient by the action of the cyclic group of order 4 generated by rotation by angle $2\pi/4$ around the origin in the elliptic curve $\mathbf C/ \Lambda$, where $\Lambda$ is the set of Gaussian integers $\{a+b i| a,b\in \mathbf Z\}$

Type $(p_1,2),(p_2,2),(p_3,2),(p_4,2)$: The quotient by the cyclic group of order 2 generated by $z\to -z$ in an elliptic curve $\mathbf C/ \Lambda$, where $\Lambda$ is any lattice in $\mathbf{C}$. Given the points $p_1,p_2,p_3,p_4$ on the Riemann sphere the elliptic curve $\mathbf C/ \Lambda$ is reconstructed as the Riemann surface of $y=\sqrt{(x-p_1)(x-p_2)(x-p_3)(x-p_4)}$ with origin chosen at the preimage of one of the points $p_1,p_2,p_3,p_4$ (if one of them, say $p_4$, is at infinity, we take $y=\sqrt{(x-p_1)(x-p_2)(x-p_3)}$)

In fact these coverings exhaust all the possibilities for minimal Galois coverings with total space a sphere or a torus. For a sphere every Galois covering is minimal. For the torus every Galois covering with the total space a torus factors through a minimal Galois covering with total space a torus through an unbranched map. We'll discuss these results in the following two sections.
 
\subsection{The Case of a Sphere}
\label{section:case_of_a_sphere}

The results and examples described above lead to a simple proof the following classical theorem, describing Galois coverings from the Riemann sphere to the Riemann sphere, or, equivalently, finite groups of automorphisms acting on the Riemann sphere:

\begin{theorem}
\label{theorem:groups_acting_on_sphere}
Suppose that $f:S\to S$ is a Galois covering. Then it is isomorphic to one of the minimal coverings listed in section \ref{section:examples_of_minimal_galois_coverings}, i.e. to the quotient of the sphere by one of the groups below:
\begin{itemize}
\item the cyclic group of rotations by angle $2\pi/n$ around an axis
\item the group of rotational symmetries of a regular $n$-gon lying on a big circle
\item the group of rotational symmetries of a tetrahedron
\item the group of rotational symmetries of a cube (or its dual octahedron)
\item the group of rotational symmetries of an icosahedron (or its dual dodecahedron)
\end{itemize}
\end{theorem}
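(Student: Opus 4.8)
The plan is to combine the Riemann--Hurwitz classification already obtained in Theorem \ref{theorem:branching_data_for_galois_coverings} with the standard monodromy description of branched coverings, and then to pin down the deck group by a triangle-group quotient argument. First I would recall that a Galois covering $f:S\to S$ of degree $n$, branched over $p_1,\dots,p_b$ with branching orders $n_1,\dots,n_b$, is encoded by its monodromy: the group $\pi_1(S\setminus\{p_1,\dots,p_b\})$ is generated by small loops $\gamma_1,\dots,\gamma_b$ around the punctures subject to the single relation $\gamma_1\cdots\gamma_b=1$, and $f$ corresponds to a surjection onto the deck group $G$ sending $\gamma_i$ to an element $g_i$ of order exactly $n_i$. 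Since $f$ is Galois, $G$ acts freely transitively on each fibre, so $|G|=n$. By Theorem \ref{theorem:branching_data_for_galois_coverings} the datum $(n_1,\dots,n_b)$ is one of the five listed types, and the Riemann--Hurwitz formula fixes the degree in each case: $n$ for $(n,n)$, $2n$ for $(2,2,n)$, and $12,24,60$ for $(2,3,3),(2,3,4),(2,3,5)$ respectively.

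The key step is to show that these constraints determine $G$ up to isomorphism. The relations above exhibit $G$ as a quotient of the abstract group $\Gamma=\langle x_1,\dots,x_b \mid x_i^{n_i}=1,\ x_1\cdots x_b=1\rangle$. For $(n,n)$ this group is cyclic of order $n$; for $(2,2,n)$ it is the dihedral group of order $2n$; and for the three exceptional triples it is the corresponding finite triangle group, classically of order $12,24,60$ and isomorphic to $A_4$, $S_4$, $A_5$. In every case $|\Gamma|$ equals the degree $|G|$ computed above, so the surjection $\Gamma\twoheadrightarrow G$ is forced to be an isomorphism, and $G$ is cyclic, dihedral, tetrahedral, octahedral or icosahedral.

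Finally I would upgrade this isomorphism of groups to an isomorphism of coverings. A Galois covering over a fixed punctured base is determined up to isomorphism by its monodromy homomorphism modulo automorphisms of $G$ and the action of the mapping class group of the punctured sphere, so the content of this last step is to check that, for each of the five groups, the generating tuples of the prescribed orders with product $1$ form a single such orbit. Granting this, $f$ is isomorphic to the explicit rotation covering of the same branching type built in section \ref{section:examples_of_minimal_galois_coverings}. A cleaner geometric route to the same end is to average a round metric on $\mathbb{CP}^1$ over the finite group $G\subset\mathrm{Aut}(\mathbb{CP}^1)$, thereby conjugating $G$ into $\mathrm{SO}(3)$ so that it acts by genuine rotations. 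I expect this rigidity---the uniqueness of the generating tuple up to automorphism, equivalently the uniqueness up to conjugacy of the resulting finite subgroup of $\mathrm{SO}(3)$---to be the main obstacle, the triangle-group computation that fixes the abstract isomorphism type being comparatively routine.
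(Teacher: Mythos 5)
Your Riemann--Hurwitz plus triangle-group argument correctly pins down the deck group up to \emph{abstract} isomorphism: the monodromy surjection $\Gamma=\langle x_1,\dots,x_b\mid x_i^{n_i}=1,\ x_1\cdots x_b=1\rangle \twoheadrightarrow G$ together with the equality $|\Gamma|=|G|=n$ forces $G\cong\Gamma$. But the theorem asserts an isomorphism of \emph{coverings}, and that is exactly the step you leave unproven: you write ``granting this'' and yourself flag the rigidity of generating tuples as the main obstacle. That check is genuine work in each of the five cases; for instance in $A_5$ the order-$5$ entries of a $(2,3,5)$-triple can lie in either of the two conjugacy classes of $5$-cycles, which are \emph{not} related by inner automorphisms and fuse only under the outer automorphism $\operatorname{Aut}(A_5)=S_5$, so ``single orbit'' cannot be waved through. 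Your proposed geometric shortcut also fails as stated: averaging a round metric on $\mathbb{CP}^1$ over $G$ produces a $G$-invariant metric, but not a round one, so it does not conjugate $G$ into $\mathrm{SO}(3)$. The correct version of that trick averages a Hermitian inner product on $\mathbf{C}^2$ to conjugate $G$ into $\mathrm{PSU}(2)\cong\mathrm{SO}(3)$, and even then you must still classify finite subgroups of $\mathrm{SO}(3)$ up to conjugacy --- which is essentially the statement being proved, so this route is close to circular.

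The paper closes precisely this gap by a different mechanism, namely Theorem \ref{theorem:description_of_class_of_functions_subject_to_given_branching_datum}, which you never invoke. Since $f$ and the model covering $g$ from section \ref{section:examples_of_minimal_galois_coverings} have the same branching datum, that theorem yields a common covering $W$ in which the map $\tilde{f}:W\to S$ over the total space of $g$ is \emph{unbranched}; the sphere is simply connected, so $\tilde{f}$ is an isomorphism, and hence $g$ dominates $f$. Because Riemann--Hurwitz determines the degree of a Galois covering of the sphere by the sphere from its branching datum alone, $f$ and $g$ have equal degree, so the dominating map has degree one and $f\cong g$ as coverings (this is what minimality of $g$ amounts to here). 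Note that this argument never needs to identify the abstract group, nor any uniqueness statement about generating tuples. To complete your proof, either carry out the single-orbit verification under $\operatorname{Aut}(G)$ for all five branching types, or replace your final paragraph with this domination argument.
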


\begin{proof}
Since $f$ is a Galois covering of a Riemann sphere over itself, we know that its branching datum is one of those described in theorem \ref{theorem:branching_data_for_galois_coverings}. Let $g:S\to S$ be the minimal Galois covering from examples in section \ref{section:examples_of_minimal_galois_coverings} with the same branching datum. According to theorem \ref{theorem:description_of_class_of_functions_subject_to_given_branching_datum} functions $f$ and $g$ fit into the diagram
$$\xymatrix
{
               &  W \ar[dl]_{\tilde{g}} \ar[dr]^{\tilde{f}} &               \\
S \ar[dr]_{f}  &                                            & S \ar[dl]^{g} \\
               &  S                                         &  
}$$
with $\tilde{f}$ unbranched. Since every covering over a sphere which is unbranched is in fact an automorphism and $g$ is minimal we get that $f$ and $g$ fit into the diagram

$$\xymatrix
{
S \ar[dr]_{f}  \ar[rr]^{\widetilde{~~~~~}} &      & S \ar[dl]^{g} \\
      &          S                         & 
}$$
\end{proof}

\begin{theorem}
Let $f:S\to S$ be a branched covering of the Riemann sphere over itself and suppose that the total space of the minimal Galois covering that dominates it is also the Riemann sphere. Then it is dominated by one of the coverings from theorem \ref{theorem:groups_acting_on_sphere} above.
\end{theorem}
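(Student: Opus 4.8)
The plan is to reduce the statement to Theorem \ref{theorem:groups_acting_on_sphere} by passing to the Galois closure. First I would name the minimal Galois covering that dominates $f$, say $g:Z\to Y$. Because $f:S\to S$, the base is $Y=S$, and by hypothesis the total space is $Z=S$ as well, so $g$ is a Galois covering of the Riemann sphere over itself. I would then recall, from the construction of the minimal Galois covering reviewed in Section \ref{section:preliminaries}, that $g$ dominates $f$ in the precise sense that there is a map $\tilde{g}:Z\to X$ (with $X=S$ the source of $f$) satisfying $f\circ\tilde{g}=g$.

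Next I would apply the classification of Galois self-coverings of the sphere. Since $g$ is a Galois covering $S\to S$, Theorem \ref{theorem:groups_acting_on_sphere} identifies it, up to isomorphism, with one of the five minimal coverings listed in Section \ref{section:examples_of_minimal_galois_coverings}: the quotient of the sphere by a cyclic, dihedral, tetrahedral, octahedral, or icosahedral group. In other words $g$ is itself one of the coverings named in Theorem \ref{theorem:groups_acting_on_sphere}.

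To finish, I would simply combine the two observations: $g$ dominates $f$, and $g$ is one of the coverings from Theorem \ref{theorem:groups_acting_on_sphere}. This is exactly the assertion that $f$ is dominated by one of those coverings.

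The honest remark here is that there is no real obstacle to overcome in this argument; all of the analytic and combinatorial content already lives in the earlier results --- Theorem \ref{theorem:branching_data_for_galois_coverings}, which pins down the admissible branching data, and Theorem \ref{theorem:groups_acting_on_sphere}, which identifies Galois coverings $S\to S$ with the five classical quotients. The present statement is the natural upgrade from Galois coverings to arbitrary branched self-coverings of the sphere whose Galois closure is again spherical, and it follows immediately once the Galois closure is fed into the previous theorem. The only thing to watch is the bookkeeping: keeping track of which copy of $S$ is source, target, or total space, and of the direction in which ``dominates'' points.
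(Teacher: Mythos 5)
Your proof is correct. The paper does not write out an argument for this statement: it only remarks that ``the proof is the same as that of Theorem \ref{theorem:groups_acting_on_sphere}'', which means re-running that proof's machinery --- the branching datum of $f$ coincides with that of its Galois closure and hence is one of the five spherical types of Theorem \ref{theorem:branching_data_for_galois_coverings}; one then invokes Theorem \ref{theorem:description_of_class_of_functions_subject_to_given_branching_datum} to fit $f$ and the minimal example $g'$ from Section \ref{section:examples_of_minimal_galois_coverings} into the diagram with an unbranched map $\tilde{f}:W\to S$, which is an isomorphism because the sphere is simply connected, so that $g'$ dominates $f$. You take a different, more modular route: you apply Theorem \ref{theorem:groups_acting_on_sphere} as a black box to the Galois closure itself, which by hypothesis is a Galois self-covering of the sphere and hence isomorphic to one of the five quotients, and which dominates $f$ by the very construction of the Galois closure in Section \ref{section:preliminaries}. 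Both arguments are sound; yours is shorter and correctly identifies the statement as an immediate corollary of the classification plus the definition of the Galois closure. What the paper's template buys is uniformity: the same W-diagram argument also proves the companion theorem stated right after this one, where one only assumes $f$ is subject to one of the spherical branching data and does not know in advance that the total space of the Galois closure has genus zero; there your shortcut is not available as stated (one would first have to derive the genus, e.g.\ via the Riemann--Hurwitz formula or via the unbranched-covering argument), whereas the paper's argument applies verbatim.
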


\begin{theorem}
Let $f:X\to S$  be a branched covering over the Riemann sphere and suppose that it is subject to one of the branching data $(n,n)$, $(2,2,n)$, $(2,3,3)$,$(2,3,4)$,$(2,3,5)$. Then $X$ is the Riemann sphere and the covering $f$ is dominated by one of the coverings from theorem \ref{theorem:groups_acting_on_sphere}.
\end{theorem}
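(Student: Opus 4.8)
The plan is to reduce the statement to domination by a \emph{manifestly spherical} Galois covering and then to use Riemann--Hurwitz to pin down the genus of $X$. Rather than work with the minimal Galois covering dominating $f$ directly (whose total space I would otherwise have to identify), I would invoke the explicit examples of section \ref{section:examples_of_minimal_galois_coverings}: for whichever of the branching data $(n,n)$, $(2,2,n)$, $(2,3,3)$, $(2,3,4)$, $(2,3,5)$ the map $f$ is subject to, let $g:S\to S$ be the corresponding standard covering (the cyclic, dihedral, tetrahedral, octahedral, or icosahedral quotient of the sphere). Its total space is the Riemann sphere \emph{by construction}, and its branching orders are exactly the prescribed ones, so $f$ is subject to the branching datum of this particular Galois covering $g$.

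Next I would apply Theorem \ref{theorem:description_of_class_of_functions_subject_to_given_branching_datum} to $f$ and $g$: there is a Riemann surface $W$ (which I may take connected, as an irreducible component of the fibered product) together with a map $\tilde{g}:W\to X$ and an \emph{unbranched} covering $\tilde{f}:W\to S$ satisfying $f\circ\tilde{g}=g\circ\tilde{f}$. Here is the crucial simplification: the target of $\tilde{f}$ is the Riemann sphere, which is simply connected, so any connected unbranched covering of it is an isomorphism. Hence $\tilde{f}$ is an isomorphism, and setting $h:=\tilde{g}\circ\tilde{f}^{-1}:S\to X$ I obtain $f\circ h=g$. In other words the standard covering $g$ already dominates $f$.

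It remains to show that $X$ is itself a sphere, and this is where I expect the only real work to lie. The map $h:S\to X$ is a nonconstant holomorphic map out of the sphere, so Riemann--Hurwitz reads $2=\deg(h)(2-2g_X)-\sum_{x}(b_x-1)$ with the branching term nonnegative; thus $\deg(h)(2-2g_X)\ge 2>0$, which forces $g_X=0$. Therefore $X$ is the Riemann sphere, and $f$ is dominated by $g$, which by Theorem \ref{theorem:groups_acting_on_sphere} is one of the listed quotients of the sphere. The main obstacle is really just making the right choice of $g$ (so that its total space is visibly genus zero, bypassing any genus computation for the minimal Galois cover) and then applying Riemann--Hurwitz in the correct direction to rule out $g_X\ge 1$; once the description theorem collapses $W$ to the sphere, everything else is immediate. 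As an alternative to the last step, one could instead verify by Riemann--Hurwitz that the minimal Galois covering dominating $f$ has total space a sphere and then quote the preceding theorem, but the direct argument above seems cleaner.
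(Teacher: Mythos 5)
Your proposal is correct and follows essentially the same route as the paper, which proves this theorem ``the same way'' as Theorem \ref{theorem:groups_acting_on_sphere}: choose the standard genus-zero minimal Galois covering $g$ with the given branching datum, apply Theorem \ref{theorem:description_of_class_of_functions_subject_to_given_branching_datum}, and use that a connected unbranched covering of the sphere is an isomorphism, so that $g$ itself dominates $f$. Your final Riemann--Hurwitz computation ruling out $g_X\geq 1$ for the map $h:S\to X$ is exactly the detail the paper leaves implicit (deferring to \cite{Bur}), since here, unlike in Theorem \ref{theorem:groups_acting_on_sphere}, the source $X$ is not assumed to be a sphere.
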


The proof is the same as that of theorem \ref{theorem:groups_acting_on_sphere} (see \cite{Bur}).

\subsection{The Case of a torus}
\label{section:case_of_a_torus}

The same considerations as in the previous section let us prove the following theorem describing Galois coverings over the Riemann sphere with total space of genus one, or, equivalently, all the finite groups of automorphisms acting non-freely on a Riemann surface of genus one:

\begin{theorem}
\label{theorem:groups_acting_on_torus}
Suppose $f:T\to S$ is a Galois covering over the Riemann sphere and $T$ has genus 1. Then it is isomorphic to the quotient of the elliptic curve $\mathbf C/\Lambda$ by the action of group $G$ where
\begin{itemize}
\item $\Lambda$ is any rank 2 lattice in $\mathbb{C}$ and $G=\{z\to \pm z + b| b\in \Lambda'/\Lambda\}$ where $\Lambda'$ is a lattice containing $\Lambda$ as a sublattice of finite index.
\item $\Lambda$ is the lattice of Gaussian integers and $G=\{z\to a z + b| a^4=1, b\in \Lambda'/\Lambda\}$ where $\Lambda'$ is a lattice containing $\Lambda$ and invariant under multiplication by $i$.
\item $\Lambda$ is the lattice of Eisenstein integers and $G=\{z\to a z + b| a^3=1, b\in \Lambda'/\Lambda\}$ where $\Lambda'$ is a lattice containing $\Lambda$ and invariant under multiplication by $\omega$ ($\omega^3=1,\omega\neq 1$).
\item $\Lambda$ is the lattice of Eisenstein integers and $G=\{z\to a z + b| a^6=1, b\in \Lambda'/\Lambda\}$ where $\Lambda'$ is a lattice containing $\Lambda$ and invariant under multiplication by $\omega$ ($\omega^3=1,\omega\neq 1$).

\end{itemize} 
\end{theorem}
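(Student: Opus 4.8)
The plan is to prove the theorem by fixing a complex-analytic uniformization of the genus-one total space and then reading off the group $G=\mathrm{Gal}(f)$ directly as a group of affine transformations, in the same spirit as the proof of Theorem \ref{theorem:groups_acting_on_sphere}. There the decisive input was that an unbranched covering of a sphere is an isomorphism; here the analogous input is that every biholomorphic automorphism of a complex torus is induced by an affine map of $\mathbf C$, together with the fact that $S$ being a sphere forces the action to be non-free (by Theorem \ref{theorem:branching_data_for_galois_coverings}).

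First I would uniformize. Since $T$ has genus one, $T\cong \mathbf C/\Lambda$ for a rank-$2$ lattice $\Lambda$, and $G$ acts by biholomorphisms. Each such biholomorphism lifts to a biholomorphism of the universal cover $\mathbf C$, hence to an affine map $z\mapsto az+b$, and compatibility with the deck group $\Lambda$ forces $a\Lambda=\Lambda$. Thus every element of $G$ has the form $z\mapsto az+b$ with $a$ in the finite cyclic group $U=\{a\in\mathbf C^{\times}: a\Lambda=\Lambda\}$ of units of $\Lambda$. By the crystallographic restriction, and since $-1\in U$ always, $U$ is $\mu_2$ for a generic lattice, $\mu_4$ precisely for the square (Gaussian) lattice, and $\mu_6$ precisely for the hexagonal (Eisenstein) lattice, in each case up to a complex scaling.

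Next I would extract the two pieces of $G$. Sending $z\mapsto az+b$ to its linear part $a$ defines a homomorphism $G\to U$; let $A$ be its image and $B$ its kernel. Then $A$ is a cyclic subgroup of $U$, while $B$ is a finite group of translations of $\mathbf C/\Lambda$, so $B=\Lambda'/\Lambda$ for a unique lattice $\Lambda'\supseteq\Lambda$ of finite index. The conjugation computation $(a,c)(1,b)(a,c)^{-1}=(1,ab)$ shows $B$ is stable under multiplication by $A$, i.e. $\Lambda'$ is $A$-invariant, which is exactly the invariance condition recorded in each bullet. Moreover the action is non-free precisely when $A\neq\{1\}$: an element with $a\neq 1$ has a fixed point because $1-a$ is a nonzero, hence surjective, isogeny of $\mathbf C/\Lambda$, whereas if $A=\{1\}$ the group $G$ consists of translations and acts freely. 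Since $S$ is a sphere, Theorem \ref{theorem:branching_data_for_galois_coverings} tells us $f$ is branched, so $A\neq\{1\}$. Enumerating the nontrivial cyclic subgroups of $U$ then yields exactly four cases, distinguished by $A$: $A=\mu_2$ for arbitrary $\Lambda$; $A=\mu_4$, forcing $\Lambda$ Gaussian; and $A=\mu_3$ or $A=\mu_6$, forcing $\Lambda$ Eisenstein, matching the four bullets.

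The main obstacle is the final normalization: I must show that after conjugating $G$ by a suitable translation $\tau_t\colon z\mapsto z+t$ (which changes none of $\Lambda$, $\Lambda'$, or $A$), the group becomes exactly the \emph{full} semidirect product $\{z\mapsto az+b: a\in A,\ b\in\Lambda'/\Lambda\}$, with no residual twist in the translation parts. Writing a generator $a_0$ of $A$ with a chosen representative $(a_0,s)\in G$, conjugation by $\tau_t$ replaces $s$ by $s+(1-a_0)t$. Because $1-a_0$ is again a nonzero, hence surjective, isogeny of $\mathbf C/\Lambda$, one can choose $t$ with $s+(1-a_0)t\equiv 0\pmod{\Lambda}$; then the lift of $a_0$ becomes the pure linear map $z\mapsto a_0 z$, and together with the translations $\Lambda'/\Lambda$ it generates precisely the claimed group, of order $|A|\cdot[\Lambda':\Lambda]=|G|$. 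This identifies $G$ up to conjugacy with the group in the corresponding bullet. As a consistency check, the four groups produce exactly the branching data $(2,2,2,2)$, $(3,3,3)$, $(2,4,4)$, $(2,3,6)$ of Theorem \ref{theorem:branching_data_for_galois_coverings}.
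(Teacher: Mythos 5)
Your proof is correct, but it takes a genuinely different route from the paper's. The paper proves this theorem by the same scheme as Theorem \ref{theorem:groups_acting_on_sphere}: Riemann--Hurwitz (Theorem \ref{theorem:branching_data_for_galois_coverings}) pins the branching datum of $f$ down to $(2,3,6)$, $(3,3,3)$, $(2,4,4)$ or $(2,2,2,2)$; Theorem \ref{theorem:description_of_class_of_functions_subject_to_given_branching_datum} then produces a surface $W$ mapping to $T$ and, by an \emph{unbranched} map $\tilde f$, to the explicit minimal example $Z=\mathbf C/\Lambda'$ of section \ref{section:examples_of_minimal_galois_coverings} with the same branching datum; the key fact that an unbranched covering of a genus-one surface again has genus one forces $W$ to be a torus isogenous to $\mathbf C/\Lambda'$, and $f$ is then identified as a quotient of $W$ (the details are delegated to \cite{Bur}). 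You instead bypass Theorem \ref{theorem:description_of_class_of_functions_subject_to_given_branching_datum}, the catalogue of minimal examples, and essentially Riemann--Hurwitz as well: you uniformize $T$, lift the Galois action to affine maps of $\mathbf C$, invoke the crystallographic restriction on the unit group of $\Lambda$, split $G$ by the linear-part homomorphism into $A$ and $B=\Lambda'/\Lambda$, and remove the translation twist of a generator of $A$ by conjugating with a translation --- the surjectivity of the isogeny $1-a_0$ is exactly the right tool both there and for non-freeness, and your order count $|A|\cdot[\Lambda':\Lambda]=|G|$ correctly closes the identification of $G$ with the full semidirect product. What the paper's route buys is uniformity: one diagram argument handles the sphere case, this case, and the two subsequent theorems on non-Galois coverings dominated by these, which is the conceptual point of the article (a single Galois covering controls the entire class of functions with a given branching datum). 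What your route buys is self-containedness and precision: it needs no pre-constructed examples, and it establishes the exact affine normal form of $G$ asserted in the bullets, including the absence of a residual translation part, a point the paper's sketch leaves implicit.
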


\begin{theorem}
Suppose that $f:X\to S$ is a branched covering such that the minimal Galois covering that dominates it has total space of genus $1$. Then $f$ is dominated by one of the coverings described in theorem \ref{theorem:groups_acting_on_torus}.
\end{theorem}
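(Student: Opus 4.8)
The plan is to reduce the statement immediately to Theorem \ref{theorem:groups_acting_on_torus}, by recognizing that the minimal Galois covering dominating $f$ is itself an object to which that theorem applies. First I would let $g:Z\to S$ denote the minimal Galois covering that dominates $f$ (its Galois closure), which always exists and is unique. By the hypothesis of the theorem, the total space $Z$ has genus $1$.

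Next I would observe that $g$ is a Galois covering from a genus-one Riemann surface onto the Riemann sphere $S$. This is exactly the situation governed by Theorem \ref{theorem:groups_acting_on_torus}, so $g$ is isomorphic to the quotient of an elliptic curve $\mathbf{C}/\Lambda$ by one of the four families of groups $G$ listed there; in particular $g$ is, up to isomorphism, one of the coverings described in that theorem.

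Finally, I would invoke the defining property of the Galois closure: by construction $g$ dominates $f$, i.e. there is a holomorphic map $p:Z\to X$ with $f\circ p=g$. Combining this with the previous observation, $f$ is dominated by $g$, which is one of the coverings of Theorem \ref{theorem:groups_acting_on_torus}, and this is precisely the assertion to be proved.

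I do not expect a genuine obstacle here: all the substantive work is already contained in Theorem \ref{theorem:groups_acting_on_torus}, whose classification of genus-one Galois coverings of the sphere I am allowed to assume. The only point requiring a moment's care is to keep apart the two distinct meanings of ``minimal'': here ``minimal Galois covering dominating $f$'' denotes the Galois closure of $f$, not a covering that is minimal among those sharing a fixed branching datum (the sense used for the explicit examples of Section \ref{section:examples_of_minimal_galois_coverings}). One could, in parallel with the proof of Theorem \ref{theorem:groups_acting_on_sphere}, instead route the argument through Theorem \ref{theorem:description_of_class_of_functions_subject_to_given_branching_datum}, producing an auxiliary diagram with an unbranched leg and comparing against the minimal torus examples; but since $g$ is already a Galois covering with total space a torus, this detour is unnecessary.
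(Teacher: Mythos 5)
Your proof is correct. Note that the paper only sketches a proof of this statement (``The proofs are basically the same as the ones in the previous section''), which gestures at the template used for Theorem \ref{theorem:groups_acting_on_sphere}: read off the branching datum, compare with a minimal example via Theorem \ref{theorem:description_of_class_of_functions_subject_to_given_branching_datum}, and exploit the unbranched leg of the resulting diagram. Your argument instead treats Theorem \ref{theorem:groups_acting_on_torus} as a black box and applies it directly to the Galois closure $g:Z\to S$, which dominates $f$ by definition; this is logically complete and, for this particular statement, cleaner than re-running the template. Indeed, the concluding step of the sphere argument---an unbranched covering of the sphere is an automorphism---has no analogue over a torus: an unbranched covering $W\to Z_0$ of a minimal torus example composed with $g_0:Z_0\to S$ need not even be Galois (the sublattice corresponding to $W$ need not be invariant under the rotation defining $g_0$), so the diagram route would require exactly the extra analysis that is already packaged inside Theorem \ref{theorem:groups_acting_on_torus}. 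Your closing remark distinguishing the two senses of ``minimal'' is also correct and worth keeping: the hypothesis refers to the Galois closure of $f$, not to minimality relative to a fixed branching datum in the sense of Section \ref{section:examples_of_minimal_galois_coverings}.
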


\begin{theorem}
Let $f:X\to S$  be a branched covering over the Riemann sphere and suppose that it is subject to one of the branching data $(2,3,6)$, $(3,3,3)$, $(2,4,4)$, $(2,2,2,2)$. Then $X$ has genus zero or one and the covering $f$ is dominated by one of the Galois coverings with total space of genus one described in theorem \ref{theorem:groups_acting_on_torus}.
\end{theorem}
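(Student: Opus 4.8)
The plan is to mirror the proof of Theorem~\ref{theorem:groups_acting_on_sphere}, replacing the fact ``an unbranched cover of the sphere is trivial'' by a lattice computation available on the torus. First I would realize the given datum as the branching datum of a concrete minimal Galois covering $g\colon T\to S$ with $T$ a torus: for $(2,3,6)$, $(3,3,3)$, $(2,4,4)$ this is one of the elliptic quotients $\mathbf C/\Lambda\to(\mathbf C/\Lambda)/\langle a\rangle\cong S$ of section~\ref{section:examples_of_minimal_galois_coverings}, where $a$ is multiplication by a root of unity of order $6,3,4$ and $\Lambda$ is the Eisenstein, Eisenstein, Gaussian lattice respectively, while for $(2,2,2,2)$ it is the hyperelliptic double cover attached to the four given points, with $a=-1$ and $\Lambda$ an arbitrary lattice. (Any three points can be moved to the branch locus of the first three coverings by a M\"obius transformation, and four points determine the elliptic curve in the last case, so such a $g$ always exists.) Since $f$ is subject to the branching datum of $g$, Theorem~\ref{theorem:description_of_class_of_functions_subject_to_given_branching_datum} produces a diagram with an unbranched $\tilde f\colon W\to T$ and a map $\tilde g\colon W\to X$ with $f\circ\tilde g=g\circ\tilde f$. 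Taking $W$ to be a connected irreducible component of the fibered product, $W$ is a connected unbranched cover of $T=\mathbf C/\Lambda$, hence itself a torus $\mathbf C/\Lambda''$ for a finite-index sublattice $\Lambda''\subseteq\Lambda$, with $\tilde f$ the natural projection.

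The genus claim for $X$ is then immediate: the Riemann--Hurwitz formula applied to $\tilde g\colon W\to X$ reads $2-2g_W=\deg(\tilde g)(2-2g_X)-\sum_x(b_x-1)$, and since $g_W=1$ the left side vanishes, forcing $2-2g_X\ge 0$, i.e.\ $g_X\le 1$.

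For the second claim I would manufacture out of $W$ a Galois covering of the form listed in Theorem~\ref{theorem:groups_acting_on_torus} that dominates $f$. Let $\Lambda'''=\bigcap_{j}a^{\,j}\Lambda''$ be the $a$-invariant sublattice contained in $\Lambda''$; since $a$ permutes the finitely many index-$[\Lambda:\Lambda'']$ sublattices $a^{\,j}\Lambda''$, it is again of finite index in $\Lambda$, and $W'=\mathbf C/\Lambda'''$ is an unbranched cover of $W$ via a projection $\pi$. Because $\Lambda$ and $\Lambda'''$ are both invariant under $a$, the finite set $G=\{\,z\mapsto a^{\,j}z+b : b\in\Lambda/\Lambda'''\,\}$ is a group acting on $W'$, and its quotient is computed by first dividing out the translations $\Lambda/\Lambda'''$ (recovering $\mathbf C/\Lambda$) and then dividing out $\langle a\rangle$ (recovering $S$). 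Thus $h\colon W'\to S$ is a Galois covering; rescaling $\Lambda'''$ to the standard Gaussian or Eisenstein lattice (an $a$-invariant lattice is homothetic to it in the cyclotomic cases, while for $a=-1$ one simply has $\Lambda'''=\Lambda''$) identifies $h$ with one of the coverings of Theorem~\ref{theorem:groups_acting_on_torus}, the theorem's superlattice $\Lambda'$ playing the role of my $\Lambda$. Finally $\tilde f\circ\pi$ is the natural projection $\mathbf C/\Lambda'''\to\mathbf C/\Lambda$, so $h=g\circ\tilde f\circ\pi=f\circ(\tilde g\circ\pi)$, which exhibits $h$ as factoring through $f$; hence $h$ dominates $f$, as required. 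Note this construction succeeds even when $X$ turns out to have genus zero.

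The main obstacle I expect is purely bookkeeping in this last step: verifying that $G$ is genuinely a group of the prescribed semidirect-product shape (which uses $a\Lambda=\Lambda$ and $a\Lambda'''=\Lambda'''$ to see that conjugating a translation by $a$ is again a translation in $\Lambda/\Lambda'''$), and that after rescaling the pair $(\Lambda''',\Lambda)$ matches the pair $(\Lambda,\Lambda')$ of Theorem~\ref{theorem:groups_acting_on_torus} in each of the four cases, with the correct order of $a$ and no spurious extra branching. Everything else is a direct transcription of the sphere argument.
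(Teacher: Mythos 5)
Your proof is correct and follows essentially the same route as the paper, whose own proof is just the remark that one repeats the argument of Theorem \ref{theorem:groups_acting_on_sphere} with ``an unbranched covering of the sphere is an isomorphism'' replaced by ``the total space of an unbranched covering of a genus-one surface has genus one'': this is precisely your diagram from Theorem \ref{theorem:description_of_class_of_functions_subject_to_given_branching_datum} with $\tilde f\colon W\to T$ unbranched, $W=\mathbf{C}/\Lambda''$ a torus, and Riemann--Hurwitz applied to $\tilde g\colon W\to X$. Your passage to the $a$-invariant sublattice $\Lambda'''=\bigcap_j a^j\Lambda''$ and to the group $\{z\mapsto a^jz+b,\ b\in\Lambda/\Lambda'''\}$ is a correct way of supplying the detail the paper defers to \cite{Bur}, namely the production of a dominating Galois covering of the exact form listed in Theorem \ref{theorem:groups_acting_on_torus}; the ``bookkeeping'' you flag does go through, since $\mathbf{Z}[i]$ and $\mathbf{Z}[\omega]$ are principal ideal domains, so every $a$-invariant lattice is homothetic to the standard one.
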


The proofs are basically the same as the ones in the previous section. They all rely on the fact that the total space of an unbranched covering over a surface of genus one has genus one. For explicit descriptions of the coverings appearing in these theorems see \cite{Bur}.

\subsection{Remarks on Functions with Galois Covering of Genus Zero or One}
\label{section:remarks_on_special_functions}

The functions that appear in the families with branching types described in the previous two sections have some remarkable properties:

\textbf{Monodromy  Groups}: The monodromy groups of the branched coverings that appear in the two sections above are:

\textit{case (n,n) }: cyclic of order $n$ 

\textit{case (2,2,n)}: dihedral of order $2n$

\textit{case (2,3,3)}: group of even permutations on 4 elements

\textit{case (2,3,4)}:  group of all permutations on 4 elements 

\textit{case (2,3,5)}:  group of even permutations on 5 elements

\textit{cases (2,2,2,2),(3,3,3),(2,4,4),(2,3,6)}: group $\{z\to a z+b| a^k=1, b\in \Lambda'/\Lambda\}$, where $k=2,3,4$ or $6$ and $\Lambda \subset \Lambda'$ are rank 2 lattices in $\mathbf{C}$ invariant under multiplication by $a$. 

\textbf{Representability by Radicals}: {Among the groups listed above all are solvable, except the group in the case $(2,3,5)$. In particular all algebraic functions with branchings of type $(n,n)$, $(2,2,n)$, $(2,3,3)$, $(2,3,4)$, $(2,3,6)$, $(3,3,3)$, $(2,4,4)$, $(2,2,2,2)$ are representable by radicals. Algebraic functions with branching type $(2,3,5)$ are representable in terms of solution of equation of degree 5.}

\textbf{Explicit Formulae}: The polynomials in cases $(n,n)$ and $(2,2,n)$ and the rational functions in cases $(2,3,6)$,$(3,3,3)$,$(2,4,4)$,$(2,2,2,2)$ are related to division theorems for periodic or doubly periodic functions. For instance the polynomials in case $(2,2,n)$ can be identified with Chebyshev polynomials satisfying $\cos{nx}=P_n(\cos{x})$ (see also section \ref{section:invertible_polynomials}). The rational functions in the case $(2,2,2,2)$ are related to division theorem for Weierstrass $\wp$-function: they express the $\wp$-function associated to a lattice in terms of the $\wp$-function associated to its sub-lattice. This description allows us to find explicit formulae for these functions, see \cite{Bur}.
\section{Other Branching Types}
\label{section:other_branching_types}
The results of the previous two sections show that for some branching types every covering in the class of branched coverings subject to them can be easily described (e.g. the total space has genus 0 or 1, the monodromy group is one of the mentioned in section \ref{section:remarks_on_special_functions}, the functions can be described by means of explicit formulae, their inverses are expressible in terms of radicals or solution of equation of degree 5).

In this section we prove that these branching types exhaust completely all the branching types for which the class of functions subject to them is ``describable'' (for several different notions of ``describable'').

\begin{theorem}
Suppose that $\mathcal{C}$ is the class of all functions subject to a branching datum which is not of the type  $(n,n)$, $(2,2,n)$, $(2,3,3)$,$(2,3,4)$,$(2,3,5)$,$(2,3,6)$, $(3,3,3)$, $(2,4,4)$ or $(2,2,2,2)$. Then $\mathcal{C}$ is either empty, or contains
\begin{itemize}
\item Coverings with total space of arbitrarily large genus
\item Coverings with monodromy group that contains a subgroup isomorphic to $G$ for any fixed finite group $G$
\item Functions whose inverses are not expressible in $k$-radicals\footnote{An equation over a field is said to be solvable in $k$-radicals if all its roots are contained in a field obtained from the base field by a chain of extensions by radicals or solutions of equations of degree at most $k$ at each step. Solvability of an equation in $k$-radicals is completely determined by its Galois group (see \cite{KhovanskiiTopologicalGaloisTheory})} for any fixed $k$
\end{itemize}
\end{theorem}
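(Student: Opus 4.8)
The plan is to reduce the whole statement to the geometry of a single Galois covering and then to exploit the richness of the fundamental group of a curve of genus at least two. First I would form the minimal Galois covering $g\colon Z\to Y$ with the given branching datum $D=(p_1,n_1),\dots,(p_b,n_b)$. If $D$ is realized by no covering at all (for instance $b=1$, or $b=2$ with $n_1\neq n_2$), then $\mathcal C$ is empty and there is nothing to prove; otherwise such a $g$ exists and its total space $Z$ is a closed Riemann surface. The key point is that $Z$ must have genus $g_Z\ge 2$: its branching datum is exactly $D$, so if $g_Z$ were $0$ or $1$, then by Theorem~\ref{theorem:branching_data_for_galois_coverings} $D$ would be one of the ten types excluded in the hypothesis, a contradiction. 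Equivalently, writing Riemann--Hurwitz for the degree-$N$ Galois covering $g$ as $2-2g_Z=N\bigl(2-2g_Y-\sum_i(1-\tfrac1{n_i})\bigr)$, the assumption that $D$ is none of the listed types says precisely that the bracket is negative, whence $g_Z\ge 2$.

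Next I would invoke Theorem~\ref{theorem:description_of_class_of_functions_subject_to_given_branching_datum}: for every unbranched covering $\tilde f\colon W\to Z$, the composite $f=g\circ\tilde f\colon W\to Y$ lies in $\mathcal C$. Thus $\mathcal C$ receives, through $g$, the entire tower of finite coverings of the genus-$\ge 2$ curve $Z$, and the three bullets become statements about that tower. The first bullet is immediate: since $\pi_1(Z)$ is a surface group of genus $g_Z\ge 2$ it has subgroups of every finite index (e.g. the kernels of $\pi_1(Z)\twoheadrightarrow\mathbf Z\twoheadrightarrow\mathbf Z/d$), giving unbranched covers $W\to Z$ of every degree $d$, and Riemann--Hurwitz $2g_W-2=d(2g_Z-2)$ forces $g_W\to\infty$.

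For the second bullet I would use that $\pi_1(Z)$, being a genus-$\ge 2$ surface group, surjects onto $F_2$ and hence onto $S_m$ for every $m$. Fix a finite group $G$, choose $m\ge\max(5,|G|)$ with $G\hookrightarrow S_m$, pick a surjection $\pi_1(Z)\twoheadrightarrow S_m$, and let $\tilde f\colon W\to Z$ be the corresponding unbranched covering (Galois over $Z$ with deck group $S_m$). Then $f=g\circ\tilde f\in\mathcal C$; I would pass to its Galois closure $\hat f\colon\hat W\to Y$ over $Y$, which still lies in $\mathcal C$ because $\hat W\to Z$ (a compositum of the unbranched conjugates $W^\sigma\to Z$) remains unbranched. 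The monodromy group $M=\mathrm{Gal}(\hat W/Y)$ contains the normal subgroup $\mathrm{Gal}(\hat W/Z)$. \emph{This is where the main obstacle lies}: standard Galois theory of the closure only tells us that $\mathrm{Gal}(\hat W/Z)$ embeds as a subdirect product of $N=\deg g$ copies of $S_m$, permuted by $\mathrm{Gal}(Z/Y)$, so a naive argument produces $G$ merely as a subquotient of $M$ rather than as an honest subgroup. I would resolve this using the rigidity of $S_m$: for $m\ge 5$ its only normal subgroups are $1,A_m,S_m$, and a short Goursat analysis then shows that every subdirect product of finitely many copies of $S_m$ contains a subgroup isomorphic to $S_m$. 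Hence $M\supseteq S_m\supseteq G$, giving the second bullet.

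The third bullet follows formally from the second. Solvability in $k$-radicals is a property of the monodromy group that is inherited by subgroups, so it suffices, for each $k$, to exhibit a finite group not solvable in $k$-radicals and to realize it inside a monodromy group by the second bullet. Taking $G=A_m$ with $m$ large enough that its unique composition factor $A_m$ cannot be assembled from abelian pieces and transitive groups of degree $\le k$ produces an $f\in\mathcal C$ whose monodromy group contains $A_m$ and is therefore not solvable in $k$-radicals; letting $m,k$ grow handles every $k$ at once. Overall I expect the Goursat/subdirect-product step, which transfers complexity from the monodromy over $Z$ to the monodromy over $Y$, to be the only genuinely delicate point, the genus and realizability statements being routine consequences of $g_Z\ge 2$.
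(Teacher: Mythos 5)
Your proposal is correct, and it follows the paper's overall skeleton: realize the datum by a Galois covering $g\colon Z\to Y$ whose total space has genus at least two (via Theorem \ref{theorem:branching_data_for_galois_coverings}), then push unbranched coverings of $Z$ into $\mathcal{C}$ (via Theorem \ref{theorem:description_of_class_of_functions_subject_to_given_branching_datum}); your first and third bullets are essentially identical to the paper's. The genuine divergence is in the second bullet, and there your extra care fixes a real gap in the paper's own argument. The paper takes an unbranched Galois $G$-covering $W\to X$ of the total space of some $f\in\mathcal{C}$ and simply asserts that the monodromy group of the composite $W\to S$ ``contains a subgroup isomorphic to $G$''. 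What the construction actually yields is that $G$ is a quotient of the subgroup $\mathrm{Gal}(\hat{W}/X)$ of the monodromy group $\mathrm{Gal}(\hat{W}/S)$, i.e.\ a subquotient, exactly as you say; and a subquotient of a finite group need not be a subgroup even when the relevant stabilizer is core-free. For instance, in $M=(\mathbf{Z}/3\mathbf{Z})^2\rtimes Q_8$ (with $Q_8$ acting faithfully), the subgroup $H$ generated by a single involution is core-free and $N_M(H)/H\cong \mathbf{Z}/2\times\mathbf{Z}/2$, yet $M$ contains no Klein four group at all, since no two of its involutions commute; so the general principle the paper implicitly invokes is false, and some additional argument is required. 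Your repair --- replace $G$ by $S_m\supseteq G$ with $m\geq 5$, pass to the Galois closure $\hat{W}\to Y$, note that $\mathrm{Gal}(\hat{W}/Z)$ is a subdirect product of at most $\deg g$ copies of $S_m$ (the conjugate fields all contain $\mathbf{C}(Z)$ because $g$ is Galois), and run a Goursat analysis using that the only normal subgroups of $S_m$ are $1$, $A_m$, $S_m$ --- is sound: in the inductive step the common Goursat quotient can only be $S_m$, $\mathbf{Z}/2$ or trivial, and each case produces a diagonal or factor copy of $S_m$, hence $S_m\supseteq G$ sits inside the full monodromy group over $Y$. The cost of your route is this extra group-theoretic lemma; what it buys is an actual proof of the second bullet, where the paper has only an assertion.

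Two small blemishes, both inherited from the paper rather than introduced by you: the excluded list has nine types, not ten; and the alternative ``$\mathcal{C}$ is empty'' is never literally satisfied, since the identity map is subject to every branching datum (for the datum $(2,3)$, for example, $\mathcal{C}=\{\mathrm{id}\}$ and none of the three bullets hold), so both your reduction step and the theorem itself should be read as ``$\mathcal{C}$ contains no covering whose branching datum is exactly the given one''.
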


\begin{proof}
If the class contains at least one covering in it, it also contains the minimal Galois covering $g$ dominating it. Since the branching datum of this covering is not one of the nine types mentioned in the formulation of the theorem, its total space must be of genus at least 2. By taking any unbranched covering over this total space and composing with $g$ we can get a covering in the class $\mathcal{C}$ with total space of arbitrarily large genus. This proves the first result.

Given any finite group $G$, one can find elements that generate it. In particular one gets a homomorphism from the free group on $m$ generators onto $G$ for some $m$. Now let $f:X\to S$ be a branched covering in the class $\mathcal{C}$ with total space of genus at least $m$. The fundamental group of this total space $X$ admits a homomorphism onto the free group with $m$ generators, and hence onto $G$. Corresponding to this homomorphism is an unbranched Galois covering over $X$ with monodromy group isomorphic to $G$. Composing it with the covering $f$ we get a covering in the class $\mathcal{C}$ with monodromy group containing a subgroup isomorphic to $G$.

In particular if we take $G=S_{k+1}$, $k\geq 4$, the symmetric group on $k+1$ elements, the inverse of the function corresponding to covering in class $\mathcal{C}$ whose monodromy group contains $G$ is not representable by $k$-radicals (see \cite{KhovanskiiTopologicalGaloisTheory}).
\end{proof}

\section{Ritt's Work}
\label{section:ritt_work}
\subsection{Rational Functions  of Prime Degree Invertible in Radicals}

While thinking about rational functions invertible in radicals, Ritt has found the following lemma due to Galois:

\begin{lemma}
If $G$ is a solvable group of permutations acting transitively on a prime number $p$ of elements, then the action of $G$ can be identified with the action of a subgroup of the group $\{z\to az+b|a\in(\mathbf Z/p\mathbf Z)^*, b\in \mathbf Z/p\mathbf Z \}$ on the set $\mathbf{Z}/p\mathbf{Z}$.
\end{lemma}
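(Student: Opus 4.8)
The plan is to exploit solvability to pin down a very small normal subgroup and then recover the entire action by conjugation on it. First I would record that, since $G$ acts transitively on $p$ points, orbit--stabilizer gives $p \mid |G|$, so Cauchy's theorem produces an element of order $p$; as $p$ is prime, any such element of $S_p$ is a single $p$-cycle. The essential move, however, is to pass to a minimal normal subgroup $N$ of $G$: because $G$ is solvable, $N$ is elementary abelian. The orbits of $N$ form a $G$-invariant partition of the $p$ points (here $N \triangleleft G$ is used, so $gNx = Ngx$), hence all orbits have a common size $d$ dividing $p$. Faithfulness of the permutation action rules out $d=1$ when $N \neq 1$, so $d=p$ and $N$ is transitive. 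A transitive abelian group acts regularly, which forces $|N|=p$; thus $N$ is cyclic of order $p$, generated by a $p$-cycle $\sigma$.

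Next I would use the regular action of $N$ to coordinatize the underlying set. Fixing a base point and calling it $0$, I identify the point $\sigma^x(0)$ with the residue $x \in \mathbf Z/p\mathbf Z$. Under this bijection $\sigma$ becomes the translation $z \mapsto z+1$, so $N$ is exactly the translation subgroup $\{z \mapsto z+b\}$.

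Finally, since $N \triangleleft G$, conjugation by any $g \in G$ is an automorphism of $N \cong \mathbf Z/p\mathbf Z$, so $g\sigma g^{-1} = \sigma^{a(g)}$ for a unique $a(g)\in(\mathbf Z/p\mathbf Z)^*$. Writing $g(0)=\sigma^{b(g)}(0)$, the computation
$$g(\sigma^x(0)) = (g\sigma g^{-1})^x(g(0)) = \sigma^{a(g)x}(\sigma^{b(g)}(0)) = \sigma^{a(g)x + b(g)}(0)$$
shows that $g$ acts as $z \mapsto a(g)z + b(g)$. Since the action of $G$ is faithful, this exhibits $G$ as a subgroup of $\{z\to az+b \mid a\in(\mathbf Z/p\mathbf Z)^*,\ b\in\mathbf Z/p\mathbf Z\}$, which is the claim.

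I expect the main obstacle to be the structural step identifying $N$: it requires combining solvability (to get an elementary abelian minimal normal subgroup), the block-system argument (to force $N$ transitive), and the regularity of transitive abelian actions (to force $|N|=p$). Once $N$ is understood, the coordinatization and the conjugation computation are routine.
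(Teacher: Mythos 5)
Your proof is correct, but note that the paper itself offers no proof of this lemma: it is quoted as a classical result due to Galois, which Ritt states and uses in \cite{Rit}, so there is no in-paper argument to compare against. What you have written is the standard (and essentially the original) argument, and every step holds up: a minimal normal subgroup $N$ of a finite nontrivial solvable group is elementary abelian; its orbits form a $G$-invariant block system, so their common size divides $p$ and cannot be $1$ by faithfulness, forcing $N$ to be transitive; a transitive abelian permutation group is regular, so $|N|=p$ and $N=\langle\sigma\rangle$ with $\sigma$ a $p$-cycle; the regular $N$-action coordinatizes the points by $\mathbf Z/p\mathbf Z$; and normality of $N$ turns conjugation into the multiplier $a(g)$, giving $g: z\mapsto a(g)z+b(g)$. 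Two small remarks: your opening step (Cauchy's theorem producing a $p$-cycle) is redundant, since the $p$-cycle $\sigma$ is delivered anyway by the minimal-normal-subgroup analysis, which is where the real content lies; and when you write $|N|=p$ you are implicitly using that an elementary abelian group that is cyclic of prime order is $\mathbf Z/p\mathbf Z$ --- trivial here, but worth saying since regularity alone gives only $|N|=p$, after which primality of $p$ pins down the isomorphism type. Neither affects correctness; your proof would serve as a complete substitute for the citation.
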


He then used this lemma and Riemann-Hurwitz formula to derive the following theorem:

\begin{theorem}
Let $f:S\to S$ be a rational function of prime degree $p$ whose inverse is invertible in radicals. Then it has branching type $$(p,p),(2,2,p),(2,3,6),(2,4,4),(3,3,3)\mbox{ or }( 2,2,2,2)$$ and thus the explicit descriptions of sections \ref{section:case_of_a_sphere} and \ref{section:case_of_a_torus} apply to it.
\end{theorem}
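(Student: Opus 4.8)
The plan is to translate the hypothesis into a statement about the monodromy group of $f$ and then read off the admissible branching data from Riemann--Hurwitz. Since the inverse of $f$ is representable in radicals, the monodromy group $G$ of $f$ --- isomorphic, by the classical theorem of Jordan recalled in the introduction, to the Galois group of the equation defining $f$ over $\mathbf{C}(S)$ --- is solvable. Because $f$ has prime degree $p$, the group $G$ acts transitively and faithfully on the $p$ sheets of $f$, so it is a transitive solvable permutation group on $p$ elements. First I would invoke the Lemma of Galois quoted above to identify $G$ with a subgroup of the affine group $\{z\to az+b\mid a\in(\mathbf Z/p\mathbf Z)^*,\ b\in\mathbf Z/p\mathbf Z\}$ acting on $\mathbf Z/p\mathbf Z$. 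Transitivity forces $G$ to contain all translations, so $G=\{z\to az+b\mid a\in H\}$ for a subgroup $H\leq(\mathbf Z/p\mathbf Z)^*$, necessarily cyclic of order $d\mid(p-1)$.

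Next I would determine the cycle type, and hence the local branching order, of each element of $G$ regarded as a permutation of the $p$ sheets. A non-identity element $z\to az+b$ with $a=1$ is a translation, a single $p$-cycle of order $p$; an element with $a\neq1$ has the unique fixed point $b/(1-a)$ and, after recentering so that it acts by $w\to aw$, permutes the remaining $p-1$ points in cycles all of the same length $e=\operatorname{ord}(a)$, where $e\mid d\mid(p-1)$. Consequently the local monodromy $\sigma_i$ around a branch point of $f$ contributes to the branching datum the value $p$ when $\sigma_i$ is a translation, and the value $e_i=\operatorname{ord}(a_i)$ when $a_i\neq1$; in the latter case $\sigma_i$ has exactly $1+(p-1)/e_i$ cycles.

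The heart of the argument is then a Diophantine enumeration coming from Riemann--Hurwitz applied to $f:S\to S$ itself. As both source and target have genus zero, the ramification count gives $\sum_i\bigl(p-\#\{\text{cycles of }\sigma_i\}\bigr)=2p-2$. Dividing by $p-1$ and using the cycle counts above turns this into
$$\sum_i c_i=2,\qquad c_i\in\{1\}\cup\Bigl\{1-\tfrac1e : e\geq2\Bigr\},$$
with $c_i=1$ for a translation and $c_i=1-1/e_i$ otherwise. I would then list all solutions: two translations give $(p,p)$; one translation together with the only decomposition of $1$ into terms $1-1/e\geq\tfrac12$, namely $\tfrac12+\tfrac12$, gives $(2,2,p)$; and with no translation the terms $1-1/e_i\in[\tfrac12,1)$ must number three or four, yielding $(2,2,2,2)$ from four halves and the solutions of $1/e_1+1/e_2+1/e_3=1$, i.e. $(3,3,3)$, $(2,4,4)$, $(2,3,6)$. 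This reproduces exactly the six asserted types. The exclusion of $(2,3,3)$, $(2,3,4)$, $(2,3,5)$ is transparent: for these $\sum(1-1/e_i)<2$, so they cannot occur for a genus-zero-to-genus-zero map, in agreement with their monodromy groups $A_4,S_4,A_5$ not being affine (and $A_5$ not being solvable at all). Once the branching type is pinned down, the concluding sentence follows by quoting the theorems of sections \ref{section:case_of_a_sphere} and \ref{section:case_of_a_torus}.

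The one genuinely delicate point, which I expect to be the main obstacle, is the passage from group data to branching datum: one must check that the \emph{order} of $\sigma_i$, i.e. the least common multiple of its cycle lengths that enters the branching datum, coincides with the single off-fixed-point cycle length $e_i$, and that this rigidity of cycle types is precisely what collapses Riemann--Hurwitz to a finite list. For an arbitrary monodromy group the $c_i$ could assume many more values and no such finiteness would hold; it is the affine structure furnished by Galois's lemma, together with prime degree forcing translations to act as full $p$-cycles, that makes the enumeration finite and forces the answer.
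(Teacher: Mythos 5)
Your proposal is correct and follows essentially the same route as the paper's own proof: invoke Galois's lemma to put the solvable transitive monodromy group inside the affine group of $\mathbf Z/p\mathbf Z$, compute the cycle types of affine permutations (a $p$-cycle when $a=1$, otherwise one fixed point plus $(p-1)/\operatorname{ord}(a)$ cycles of equal length), and feed these into Riemann--Hurwitz to get the same Diophantine equation --- your $\sum_i c_i = 2$ is the paper's $\sum_i 1/n_i = k-2$ after dividing by $p-1$, with the translation case playing the role of $n_i=\infty$ --- whose six solutions give the listed branching types. The extra details you supply (that $G$ contains all translations, and that the local branching order equals the common off-fixed-point cycle length $e_i$) are correct refinements of steps the paper leaves implicit.
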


\begin{proof}
The monodromy group of such a rational function is a solvable group acting transitively on $p$ elements, hence Galois's lemma applies to it. In particular the permutations that correspond to small loops around the critical points of $f$ are of the type $z\to az+b$ for $z\in \mathbf{Z}/p\mathbf{Z}$ and $a\in (\mathbf Z/p\mathbf Z)^*, b\in \mathbf Z/p\mathbf Z$. In particular the number of preimages of such a critical point is equal to $1$ if $a=1$ and is equal to $1+\frac{p-1}{\operatorname{ord}{a}}$ otherwise ($\operatorname{ord}{a}$ is the order of $a$ in group $(\mathbf Z/p\mathbf Z)^*$). Indeed, if $a\neq 1$ there is exactly one point fixed by the permutation $z\to az+b$ and other points split into $\frac{p-1}{\operatorname{ord}{a}}$ cycles of length $\operatorname{ord}{a}$ each. Plugging this result to Riemann-Hurwitz formula gives the following restrictions on the orders $n_1,\ldots,n_k$ of the $a$'s appearing in the permutations $z\to az+b$ corresponding to small loops around critical points of $f$ (we adopt the convention that for $a=1$ the corresponding order is $\infty$):
$$2=2p-\sum\limits_{i=1}^{k}\left(p-1-\frac{p-1}{n_i}\right)$$
or $$\sum\limits_{i=1}^{k}\frac{1}{n_i}=k-2$$
($k$ is the number of critical points of $f$). There are only six solutions of this equation: \begin{align*} \frac{1}{\infty}+\frac{1}{\infty}=2-2 \\ \frac{1}{2}+\frac{1}{2}+\frac{1}{\infty}=3-2 \\ \frac{1}{2}+\frac{1}{3}+\frac{1}{6}=3-2 \\\frac{1}{2}+\frac{1}{4}+\frac{1}{4}=3-2 \\ \frac{1}{3}+\frac{1}{3}+\frac{1}{3}=3-2 \\ \frac{1}{2}+\frac{1}{2}+\frac{1}{2}+\frac{1}{2}=4-2 \end{align*}

They correspond exactly to the branching types mentioned in the theorem.
\end{proof}

\subsection{Polynomials Invertible in Radicals}
\label{section:invertible_polynomials}

We'll say that a rational function is indecomposable if it can't be written as a composition of rational functions of degrees greater than one. Ritt has noted in \cite{ritt1922prime} that every rational function can be written as a composition of indecomposable ones and that a rational function is indecomposable if and only if the monodromy of its inverse acts primitively on the sheets. Thus the classification of rational functions invertible in radicals reduces to classification of indecomposable rational functions invertible in radicals and is closely related to classification of primitive solvable groups of permutations.

This classification seems to be rather involved. However Ritt has noticed that it simplifies a lot in the case of polynomials due to the following lemma (see \cite{Rit}):

\begin{lemma}
Suppose that a primitive solvable group of permutations $G$ acts on the set of numbers from $1$ to $n$ and contains a cycle of length $n$. Then either $n$ is prime or $n=4$.
\end{lemma}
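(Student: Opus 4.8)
The plan is to pin down the structure of $G$ first and then extract numerical constraints from the $n$-cycle. A solvable primitive permutation group is, by the classical structure theorem generalizing the Galois lemma above (its minimal normal subgroup is abelian and transitive, hence regular and elementary abelian), of degree $n=p^d$ and sits inside the affine group $\mathrm{AGL}(d,p)=V\rtimes\mathrm{GL}(d,p)$, where $V=(\mathbf Z/p\mathbf Z)^d$ acts by translations and the point stabiliser $H\le\mathrm{GL}(d,p)$ acts irreducibly. I would record at once that the only feature of primitivity I shall actually use is this affine embedding with $n=p^d$; irreducibility of $H$ plays no role. Thus $n=p^d$ is automatic, and the whole problem becomes: for which $(p,d)$ can $\mathrm{AGL}(d,p)$ contain an element of order $p^d$ --- equivalently a single $n$-cycle, since on $p^d$ points an element of order $p^d$ must be a single cycle?

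Next I would analyse such an element directly. Write the cycle as the affine map $\sigma\colon v\mapsto Av+b$ with $A\in\mathrm{GL}(d,p)$ and $b\in V$. Since $\sigma$ has $p$-power order, so does $A$; being a $p$-element in characteristic $p$ it is unipotent, so $A=I+N$ with $N$ nilpotent and $N^d=0$. Iterating gives $\sigma^m\colon v\mapsto A^m v + S_m b$ with $S_m=\sum_{k=0}^{m-1}A^k$. Expanding by the binomial theorem and summing via the hockey-stick identity yields $S_m=\sum_{j\ge 0}\binom{m}{j+1}N^{j}$. The key simplification is to take $m=p^{e}$ where $p^{e}=|A|$: by Lucas's theorem $\binom{p^e}{i}\equiv 0\pmod p$ for $0<i<p^e$, so every intermediate term vanishes and $S_{p^e}=N^{\,p^{e}-1}$ over $\mathbf Z/p\mathbf Z$. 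Hence $\sigma^{p^e}$ is the pure translation $v\mapsto v+N^{\,p^{e}-1}b$.

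From here the counting is immediate. Since a nontrivial translation has order $p$, the identity $\sigma^{p^e}=(\,I,\,N^{p^e-1}b\,)$ forces $|\sigma|$ to divide $p^{e+1}$, while $|A|=p^e$ divides $|\sigma|$; a short check (letting $\nu$ denote the nilpotency index of $N$, and using $p^{e-1}<\nu\le d=e$) rules out $|\sigma|=p^e$, so $|\sigma|=p^{e+1}=p^d$, i.e. $d=e+1$. But $|\sigma|=p^{e+1}$ requires $\sigma^{p^e}\neq\mathrm{id}$, hence $N^{p^e-1}b\neq 0$ and in particular $N^{p^e-1}\neq 0$, so $\nu\ge p^e$. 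Combining with $\nu\le d=e+1$ gives the single clean inequality $p^{e}\le e+1$. For $p\ge 2$ this holds only for $e=0$ (any $p$, giving $d=1$ and $n=p$ prime) and for $e=1$ with $p=2$ (giving $d=2$ and $n=4$); every $e\ge 2$ fails since $p^e\ge 2^e>e+1$. This yields exactly the two possibilities $n$ prime or $n=4$.

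The delicate point, carrying the real content, is the sharp evaluation $S_{p^e}=N^{p^e-1}$: the coarse estimates $p^{e-1}<d$ and $d\le e+1$ by themselves still admit the spurious cases $n=p^2$ (odd $p$) and $n=8$, and only the exact mod-$p$ cancellation from Lucas's theorem excludes them. I would therefore present the hockey-stick/Lucas computation carefully, and verify the boundary case $p=2,\ e=1$ by hand to confirm that $n=4$ is genuinely realised (there $S_2=N\neq 0$ for a regular nilpotent $N$, producing an honest $4$-cycle), so that the exception in the statement is not vacuous.
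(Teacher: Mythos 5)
The paper never proves this lemma: it is stated with a pointer to Ritt's paper (``see \cite{Rit}''), so there is no internal proof to compare yours against, and your argument has to be judged on its own. It is correct and complete. You invoke Galois's structure theorem for primitive solvable groups --- the same fact the paper quotes, in its degree-$p$ form, as ``Galois's lemma'' one subsection earlier --- to get $n=p^d$ and $G\le \mathrm{AGL}(d,p)$, and then analyze the $n$-cycle $\sigma\colon v\mapsto Av+b$ directly: $A=I+N$ is unipotent since its order is a $p$-power in characteristic $p$; the hockey-stick identity gives $S_m=\sum_j\binom{m}{j+1}N^j$; Lucas's theorem collapses this at $m=p^e=|A|$ to $S_{p^e}=N^{p^e-1}$, so $\sigma^{p^e}$ is a translation; ruling out $|\sigma|=p^e$ (your inequality $p^{e-1}<\nu\le e$ is indeed impossible for $e\ge 1$, since $p^{e-1}\ge e$) forces $d=e+1$ and $N^{p^e-1}\neq 0$, hence $p^e\le \nu\le d=e+1$, whose only solutions are $e=0$ (any $p$, so $n=p$ prime) and $e=1,\ p=2$ (so $n=4$). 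Each of these steps checks out, and your closing observation is well taken: the exact mod-$p$ evaluation of $S_{p^e}$ is what kills the spurious candidates $n=p^2$ and $n=8$ that survive the naive bounds. What this buys over the paper's citation is a self-contained, purely linear-algebraic proof that also exhibits why $n=4$ genuinely occurs (a regular unipotent $A$ in $\mathrm{AGL}(2,2)$ yields a $4$-cycle, matching $S_4$). Two small polish items: you silently use $A^{p^{e-1}}=I+N^{p^{e-1}}$ (the same Frobenius/Lucas fact) to get $p^{e-1}<\nu$, so state it once and reuse it; and you should exclude the degenerate case $n=1$ (i.e.\ $e=d=0$) explicitly, since the lemma implicitly assumes $n\ge 2$.
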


This lemma, together with the classification of rational functions of prime degree invertible in radicals and the fact that degree four polynomials are invertible in radicals gives rise to the following remarkable theorem:

\begin{theorem}
Suppose that $f:S\to S$ is a polynomial whose inverse is expressible in radicals. Then $f$ is the composition of polynomials which, up to a linear change of variable in the source and in the image, are either $z\to z^n$, Chebyshev polynomials or polynomials of degree 4.
\end{theorem}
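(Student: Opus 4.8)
The plan is to combine Ritt's decomposition theory with the prime-degree classification just established and the Lemma on primitive solvable groups containing a full cycle. First I would translate the hypothesis: since the inverse of $f$ is expressible in radicals, the monodromy group $G$ of $f$ (equivalently, the Galois group of the extension of rational function fields defined by $f$) is solvable. The goal is then to read off the structure of the \emph{polynomial} indecomposable factors of $f$.

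The first real step is to decompose $f$ into indecomposable pieces and to verify that these pieces may be taken to be polynomials. Writing $f=f_1\circ\cdots\circ f_r$ with each $f_i$ indecomposable (Ritt), I would use the fact that a polynomial is exactly a rational function totally ramified over a single point, namely $\infty$: normalizing by linear changes so that every partial composition fixes and is totally ramified at $\infty$, each $f_i$ becomes, up to a linear change of variable in source and target, a polynomial of some degree $n_i$ with $\infty$ totally ramified. In particular the local monodromy of $f_i$ at $\infty$ is a single $n_i$-cycle, so the monodromy group of $f_i$ contains an $n_i$-cycle. Moreover, indecomposability is equivalent to primitivity of the monodromy, and since the monodromy group of each factor is a subquotient of $G$ it is again solvable.

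Next I would invoke the Lemma: a primitive solvable permutation group on $n_i$ letters containing an $n_i$-cycle forces $n_i$ to be prime or $n_i=4$. The degree-$4$ factors are already on the allowed list, so it remains to analyze a factor $f_i$ that is an indecomposable polynomial of prime degree $p$ with solvable (hence radical-invertible) inverse. Here I would apply the prime-degree classification theorem, restricting the branching type of $f_i$ to $(p,p),(2,2,p),(2,3,6),(2,4,4),(3,3,3)$ or $(2,2,2,2)$. The key extra constraint is that $f_i$ is a polynomial, hence has a totally ramified point over $\infty$: in the affine presentation $z\mapsto az+b$ of the local monodromies, a single $p$-cycle must be a translation $a=1$, i.e. one of the branch terms in the Riemann--Hurwitz balance $\sum 1/n_j=k-2$ must be the $\infty$ (fixed-point-free) term. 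Only two of the six solutions contain such a term, namely $\tfrac{1}{\infty}+\tfrac{1}{\infty}$ and $\tfrac12+\tfrac12+\tfrac{1}{\infty}$, corresponding exactly to the branching types $(p,p)$ and $(2,2,p)$. Identifying these minimal coverings as in the sphere case shows that $(p,p)$ is the power map $z\mapsto z^p$ and $(2,2,p)$ is the Chebyshev polynomial of degree $p$, both up to linear changes of variable.

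Assembling the factors finishes the argument: every indecomposable polynomial factor of $f$ is, after a linear change of variable in source and image, a power map $z\mapsto z^p$, a Chebyshev polynomial, or a degree-$4$ polynomial, and a composition of prime power maps (respectively prime-degree Chebyshev polynomials, using $T_m\circ T_n=T_{mn}$) is again a power map (respectively a Chebyshev polynomial) of the stated form. I expect the main obstacle to be the bookkeeping of the decomposition step --- verifying that the indecomposable factors can genuinely be realized as polynomials and that solvability of $G$ descends to each factor as a subquotient --- since the remainder follows mechanically once each factor is known to be a primitive, radical-solvable polynomial carrying a full cycle.
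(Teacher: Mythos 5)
Your proposal is correct and follows exactly the route the paper indicates for this theorem: Ritt's decomposition into indecomposable factors (with indecomposability equivalent to primitivity of the monodromy), the lemma forcing each factor's degree to be prime or $4$ via the full cycle coming from total ramification at $\infty$, and the prime-degree classification cut down by the polynomial condition to the types $(p,p)$ and $(2,2,p)$, i.e.\ power maps $z\mapsto z^p$ and Chebyshev polynomials. The paper only cites these ingredients without writing out the details, and your argument is precisely their assembly, including the needed checks (solvability passing to subquotients, normalizing the factors to be polynomials) that the paper leaves implicit.
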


We will show how to invert Chebyshev polynomials and polynomials of degree 4 in radicals.

For Chebyshev polynomial defined by the identity $\cos{nx}=P_n(\cos{x})$ we can find the inverse by the following trick:

\begin{align*}\cos{x}&=\frac{e^{ix}+e^{-ix}}{2}=\frac{\sqrt[n]{e^{inx}}+\sqrt[n]{e^{-inx}}}{2}\\&=\frac{\sqrt[n]{\cos{nx}+i\sin{nx}}+\sqrt[n]{\cos{nx}-i\sin{nx}}}{2}\\&=\frac{\sqrt[n]{\cos{nx}+i\sqrt{1-\cos^2{nx}}}+\sqrt[n]{\cos{nx}-i\sqrt{1-\cos^2{nx}}}}{2}\end{align*}

Thus the inverse of $w=P_n(z)$ is $$z=\frac{\sqrt[n]{w+i\sqrt{1-w^2}}+\sqrt[n]{w-i\sqrt{1-w^2}}}{2}$$

Finally to find the inverse on a degree 4 polynomial in radicals it is clearly enough to know how to solve a degree 4 polynomial equation $ax^4+bx^3+cx^2+dx+e=0$ in radicals. We will show a beautiful trick \cite{berger1987geometry} that reduces this problem to solving a cubic equation in radicals.

Instead of solving $ax^4+bx^3+cx^2+dx+e=0$ we will introduce a new variable $y=x^2$ and try to solve the system of equations $ay^2+bxy+cx^2+dx+e=0, y=x^2$. Let us give names to the quadratic forms appearing in these equations \begin{align*}Q_1(x,y)&=ay^2+bxy+cx^2+dx+e \\ Q_2(x,y)&=y-x^2\end{align*}

\begin{center}
\includegraphics[height=0.2\textheight]{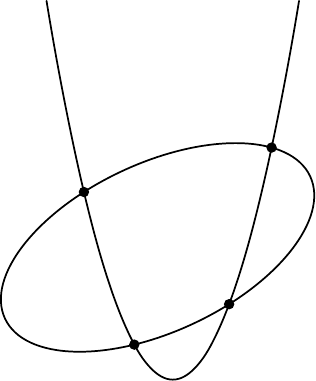}
\end{center}

Consider the pencil of all quadrics defined by equations $$Q_1(x,y)+\lambda Q_2(x,y)=0$$ In this pencil there are exactly three singular quadrics:

\begin{center}
\includegraphics[width=\the\linewidth]{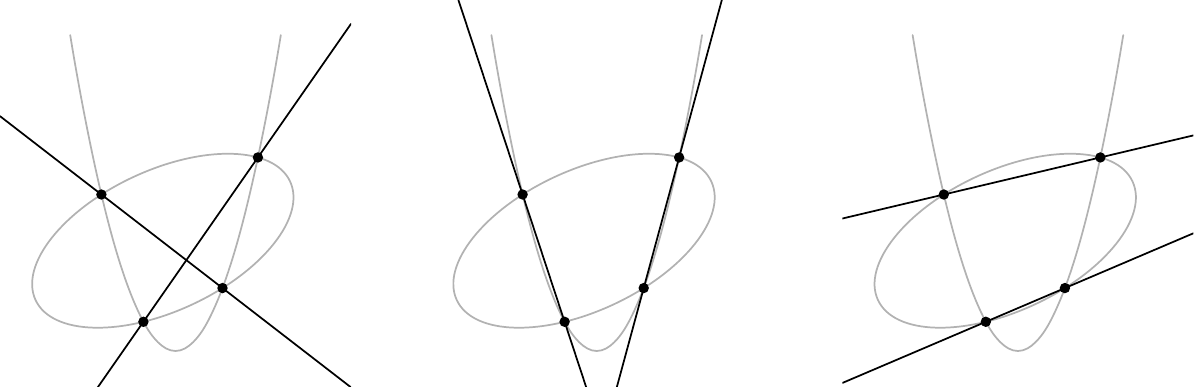}
\end{center}

To find the parameters $\lambda$ that correspond to these singular quadrics, we should solve the cubic equation in $\lambda$ equating the discriminant of the quadric from the pencil to zero. Once this equation is solved, the points of intersection of the quadrics can be found by intersecting any pair of singular quadrics we found --- this essentially amounts to simply intersecting pairs of lines.

A cubic equation that we obtained in process is in fact one we can solve already: if the cubic polynomial in question has two finite critical values, it is a Chebyshev polynomial (up to a linear change of variable). If it has only one finite critical value, it is a cube of a linear polynomial.

\bibliographystyle{plain}
\bibliography{article}

\end{document}